\numberwithin{equation}{section}
\let\f=\frac
\let\Om=\Omega
\let\na=\nabla
\def\R{\mathbb R}
\def\T{\mathbb T}
\newcommand{\beq}{\begin{equation}}
\newcommand{\eeq}{\end{equation}}
\newcommand{\ben}{\begin{eqnarray}}
\newcommand{\een}{\end{eqnarray}}
\newcommand{\beno}{\begin{eqnarray*}}
\newcommand{\eeno}{\end{eqnarray*}}
\newcommand{\ba}{\begin{array}}
\newcommand{\ea}{\end{array}}
\newtheorem{theorem}{Theorem}[section]
\newtheorem{lemma}[theorem]{Lemma}
\begin{document}

\title[Global well-posedness of the MHD equations]
{Global well-posedness of the MHD equations via the comparison principle}

\author{Dongyi Wei}
\address{School of Mathematical Sciences, Peking University, 100871, Beijing, P. R. China}
\email{jnwdyi@163.com}

\author{Zhifei Zhang}
\address{School of Mathematical Sciences, Peking University, 100871, Beijing, P. R. China}
\email{zfzhang@math.pku.edu.cn}

\date{\today}

\begin{abstract}
In this paper, we prove the global well-posedness of the incompressible MHD equations near a homogeneous equilibrium in the domain $\R^k\times\mathbb{T}^{d-k}, d\geq2,k\geq1$ by using the comparison principle and constructing the comparison function.
 \end{abstract}

\maketitle

\section{Introduction}

In this paper, we consider the incompressible magneto-hydrodynamics (MHD) equations in $[0,T)\times \Omega, \Omega\subseteq\R^{d}, d\ge 2$:
\begin{align}\label{eq:MHD}
\left\{
\begin{array}{l}
\partial_t v-\nu\Delta v+v\cdot\nabla v+\nabla p=b\cdot\nabla b,\\
\partial_t b-\mu\Delta b+v\cdot\nabla b=b\cdot\nabla v,\\
\text{div } v=\text{div } b=0,
\end{array}\right.
\end{align}
where $v$ denotes the velocity field and $b$ denotes the magnetic field, and $\nu\ge 0$ is the viscosity coefficient, $\mu\ge 0$ is the resistivity coefficient.  If $\nu=\mu=0$, (\ref{eq:MHD}) is the so called ideal MHD equations; If $\nu>0$ and $b=0$, (\ref{eq:MHD}) is reduced to the Navier-Stokes equations.

It is well-known that the 2-D MHD equations with full viscosities(i.e., $\nu>0$ and $\mu>0$) have global smooth solution. In the absence of resistivity(i.e., $\mu=0$), the global existence of weak solution and strong solution of the MHD equations is still an open question. Recently, Cao and Wu \cite{CW}  proved the global regularity of the 2-D MHD equations with partial dissipation and magnetic diffusion. Motivated by numerical observation \cite{CC}: the energy of the MHD equations is dissipated at a rate independent of the ohmic resistivity, there are a lot of works \cite{AZ, LXZ, Ren1, Ren2, Zhang}  devoted to the global well-posedness of the MHD equations without resistivity in a homogeneous magnetic field $B_0$.

In high temperature plasmas, both $\nu$ and $\mu$ are usually very small.  Thus, it is very interesting to investigate the long-time dynamics of the MHD equations in such case. In this paper, we consider the case of $\mu=\nu$. In terms of  the Els$\ddot{a}$sser variables
$$Z_+=v+b,\quad Z_-=v-b,$$
the MHD equations  (\ref{eq:MHD})  can be written as
\begin{align}\label{eq:MHD-e}
\left\{
\begin{array}{l}
\partial_t Z_++Z_-\cdot\nabla Z_+=\mu\Delta Z_+-\nabla p,\\
\partial_t Z_-+Z_+\cdot\nabla Z_-=\mu\Delta Z_--\nabla p,\\
\text{div} Z_+=\text{div}Z_-=0.
\end{array}\right.
\end{align}
 We introduce the fluctuations
 \beno
 z_+=Z_+-B_0,\quad z_-=Z_-+B_0.
 \eeno
Then the system (\ref{eq:MHD-e}) can be reformulated as
\begin{align}\label{eq:MHD-f}
\left\{
\begin{array}{l}
\partial_t z_++Z_-\cdot\nabla z_+=\mu\Delta z_+-\nabla p,\\
\partial_t z_-+Z_+\cdot\nabla z_-=\mu\Delta z_--\nabla p,\\
\text{div} z_+=\text{div}z_-=0.
\end{array}\right.
\end{align}

Bardos-Sulem-Sulem \cite{BSS} proved the global well-posedness of (\ref{eq:MHD-f}) for $\Om=\R^d$ and $\mu=0$ when the initial data is small in a weighed H\"{o}lder space.  Recently, He-Xu-Yu \cite{HXY} proved the global well-posedness of (\ref{eq:MHD-f})  for any $\mu\ge 0$ and $\Om=\R^3$ by using some ideas from nonlinear stability of Minkowski space-time in general relativity. Cai and Lei \cite{CL} proved simliar result for $\Om=\R^d, d=2,3$ by using Alinhac's ghost weight method.  Wei and Zhang \cite{WZ} dealt with more physical case, which allows $\nu\neq \mu$ and $\Om$ to be a strip. For all these results, the key mechanism leading to the global well-posedness is that  the nonlinear terms $z_-\cdot\na z_+$ and $z_+\cdot\na z_-$ are essentially neglected after a long time, because $z_\pm$  are transported along the opposite direction.

The goal of this paper is twofold: (1) include the domain $\R^k\times \T^{d-k}, k\ge 1$. Previous results required $k\ge 2$ at least; (2) develop an elementary and much simpler method via the comparison principle.\smallskip

Without loss of generality, we take the background magnetic field $B_0=(1,0,\cdots,0)$.  We will identify a function in $\R^k\times \T^{d-k}$ with a function in $\R^d$ periodic in $ d-k$ directions $ e_1,\cdots, e_{d-k}$, where $e_1,\cdots, e_{d-k},B_0$ are orthogonal.

We introduce
\begin{align}\label{rho1}\rho_{\pm}(t,X)^2=\sum_{k=0}^N\int_{\R^d}|\nabla^k z_{\pm}|^2(t,Y)\theta(|X-Y|)dY,
\end{align}
where $\theta(r)$ is a smooth cut-off function so that
$$\theta(r)=\left\{\ba{l}1\quad\text{for}\ |r|\leq 1,\\0\quad\text{for}\ |r|\geq 2,\ea\right.\ \ |\theta'(r)|^2\leq C\theta(r).$$
Let
\begin{align}\label{J1}J_{\pm}(0)=\int_{\R}\sup_{y\in\R^{d-1}}\rho_{\pm}(0,x,y)dx.
\end{align}

Our result is stated as follows.

\begin{theorem}\label{thm:main}\quad
Let $d\ge 2, 1\le k\le d$ and $z_\pm(0)\in H^N(\R^k\times\T^{d-k})$ for some integer $N>\f d2+1$.
There exists $\epsilon_1>0$ so that if $J_\pm(0)\le \epsilon_1$, then there exists a global unique solution $(z_+,z_-)\in C\big([0,+\infty),H^N(\R^k\times\mathbb{T}^{d-k})\big)$ to  the MHD equations (\ref{eq:MHD-f}) satisfying
\beno
\|z_\pm(t)\|_{H^N(\R^k\times\T^{d-k})}\le C\|z_\pm(0)\|_{H^N(\R^k\times\T^{d-k})}\quad \text{for any}\,\, t\in [0,+\infty).
\eeno
\end{theorem}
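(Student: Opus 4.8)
\emph{Overall strategy.} The plan is to run a continuity/bootstrap argument on the localized quantity $J_\pm$, exploiting that $z_+$ and $z_-$ are transported in the \emph{opposite} directions $-B_0$ and $+B_0$ (since $Z_\mp=z_\mp\mp B_0$), so that their nonlinear interaction is integrable in time. The comparison principle will be the tool that converts this heuristic into a rigorous bound.

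\emph{Step 1: a localized energy inequality.} First I would differentiate $\rho_\pm^2$ in $t$, insert the equations \eqref{eq:MHD-f} for $\na^j z_\pm$, and integrate against $\theta(|X-Y|)$. Since $\dive Z_\mp=0$, the transport term $Z_\mp\cdot\na(\na^j z_\pm)$ becomes $\int|\na^j z_\pm|^2 Z_\mp\cdot\na_Y\theta\,dY$; writing $Z_\mp=\mp B_0+z_\mp$ and using $\na_Y\theta=-\na_X\theta$ turns the constant part into the exact transport $\pm B_0\cdot\na_X\rho_\pm^2$, while the Laplacian contributes $\mu\Delta_X\rho_\pm^2$ minus a nonnegative dissipation. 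Hence
\[
(\pa_t\mp B_0\cdot\na_X-\mu\Delta_X)\rho_\pm^2\le \mathcal F_\pm,
\]
where $\mathcal F_\pm$ collects the $z_\mp$-part of the transport, the commutators $[\na^j,Z_\mp\cdot\na]z_\pm$, and the pressure (handled by the Leray projector / the divergence-free condition, the cut-off $\theta$ producing only a $\na\theta$ boundary term). Putting one factor of $z_\pm$ in $L^\infty$ via Sobolev on the unit ball — this is exactly where $N>\tfrac d2+1$ is used, as it gives $\|\na z_\pm\|_{L^\infty(B)}\lesssim\rho_\pm$ — and bounding the remaining $z_\mp$ by $\rho_\mp\le m_\mp$ yields $\mathcal F_\pm\lesssim m_\mp\,\rho_\pm^2$, where $m_\pm(t,x):=\sup_{y\in\R^{d-1}}\rho_\pm(t,x,y)$ and $x\in\R$ is the coordinate along $B_0$. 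Taking the square root gives the sub-solution inequality $(\pa_t\mp\pa_{x}-\mu\Delta_X)\rho_\pm\le C m_\mp\rho_\pm$.

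\emph{Step 2: reduction to one dimension.} Next I would pass to $m_\pm=\sup_y\rho_\pm$. Taking the supremum over the transverse variable of a sub-solution keeps it a sub-solution: at an interior maximum in $y$ one has $\Delta_y\rho_\pm\le0$, so the transverse diffusion only helps, and what survives is the one-dimensional inequality $\pa_t m_\pm\mp\pa_x m_\pm-\mu\pa_x^2 m_\pm\le C m_+m_-$. Integrating in $x\in\R$ kills the transport and the $x$-diffusion (both are exact $x$-derivatives of integrable quantities), leaving
\[
\f{d}{dt}J_\pm(t)\le C\int_\R m_+(t,x)\,m_-(t,x)\,dx=:C\,I(t).
\]

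\emph{Step 3: the comparison principle (main step).} The heart is to show $\int_0^\infty I(t)\,dt\lesssim J_+(0)J_-(0)$, i.e. that the interaction is time-integrable because $m_+$ runs left and $m_-$ runs right. I would encode this by the comparison/monotonicity functional
\[
\Phi(t)=\int_\R\int_\R m_+(t,x)\,m_-(t,x')\,\mathbf 1_{\{x>x'\}}\,dx\,dx',
\]
which is nonnegative and bounded by $J_+(t)J_-(t)\le(2\epsilon_1)^2$. For the pure opposite transport a direct computation gives $\f{d}{dt}\Phi=-2I(t)$, so that $\int_0^\infty I\,dt\le\tfrac12\Phi(0)\le\tfrac12 J_+(0)J_-(0)$; equivalently, one may build an explicit constant-coefficient transport--diffusion barrier dominating $m_\pm$ and read the bound off it (the \emph{comparison function} of the title). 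I expect the main obstacle to lie precisely here: making the opposite-transport heuristic rigorous in the presence of the self-advection velocity $z_\mp$ and, when $\mu>0$, of the diffusion cross-terms in $\f{d}{dt}\Phi$ (a Wronskian-type expression $\mu\int(m_+\pa_x m_--m_-\pa_x m_+)\,dx$ and the cubic forcing contributions), which must be shown to be lower-order and reabsorbed under the smallness of $\epsilon_1$ rather than adverse.

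\emph{Step 4: closing the bootstrap and the $H^N$ bound.} Combining Steps 2--3, on any interval where $J_\pm\le 2\epsilon_1$ one gets $J_\pm(t)\le J_\pm(0)+C\epsilon_1^2\le\tfrac32\epsilon_1$ for $\epsilon_1$ small, so the smallness of $J_\pm$ propagates for all time by continuity. For the $H^N$ bound I would integrate the localized inequality of Step 1 over all $X$: transport and diffusion drop out and, since $\int_{\R^d}\rho_\pm^2\,dX=c\,\|z_\pm\|_{H^N}^2$, one is left with $\f{d}{dt}\|z_+\|_{H^N}^2\lesssim\int_\R m_-(x)E_+(x)\,dx$ with $E_+(x)=\int_{\R^{d-1}}\rho_+^2(x,y)\,dy$. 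The very same opposite-transport functional, now pairing $E_+$ (which also travels leftward) against $m_-$, gives $\int_0^\infty\!\int_\R m_-E_+\,dx\,dt\lesssim J_-\sup_t\|z_+\|_{H^N}^2$, whence $\sup_t\|z_+\|_{H^N}^2\le\|z_+(0)\|_{H^N}^2+C\epsilon_1\sup_t\|z_+\|_{H^N}^2$ and, for $\epsilon_1$ small, the stated bound $\|z_+(t)\|_{H^N}\le C\|z_+(0)\|_{H^N}$ (and symmetrically for $z_-$). Global existence and uniqueness then follow from local well-posedness in $H^N$ (valid since $N>\tfrac d2+1$) together with this a priori bound through the usual continuation criterion.
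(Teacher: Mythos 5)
Your Steps 2--3 (reduction to one dimension by $\sup_y$, then a Glimm-type interaction functional $\Phi$) take a genuinely different route from the paper, which never takes suprema of the solution: it proves a comparison principle for the inequality in all of $\R^d$ and then \emph{constructs an explicit supersolution} from one-dimensional data (Lemma \ref{lem:com fun}). Before comparing, note that your Step 1 is wrong as stated: the pressure is not a mere ``$\na\theta$ boundary term''. Since $p=(-\Delta)^{-1}\pa_i\pa_j(z_+^jz_-^i)$ is nonlocal, $\|\na p\|_{H^N(B(X,1))}$ sees the product $z_+z_-$ at arbitrary distance from $X$; a substantial part of the paper's Section 2 (the Newton-potential splitting together with Lemma \ref{lem:local}) is devoted to proving the correct bound \eqref{eq:F-est}, namely $F\le C(\rho_+\rho_-)*N_1$ with the polynomially decaying kernel $N_1(X)=(1+|X|^{d+1})^{-1}$. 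Consequently your pointwise bound $\mathcal F_\pm\lesssim m_\mp\rho_\pm^2$ is unavailable; after taking $\sup_y$ the best forcing you can get is the nonlocal one $\int_\R (m_+m_-)(x')(1+|x-x'|)^{-2}dx'$. This particular defect looks repairable (the $x$-integral of that forcing is still $\lesssim I(t)$, and its contribution to $\Phi$ is $\lesssim J_\mp I$), but it must actually be carried out.

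The decisive gap is the one you flag yourself and leave open: for $\mu>0$, $\f{d}{dt}\Phi$ contains the Wronskian term $\mu\int_\R(m_+\pa_xm_--m_-\pa_xm_+)\,dx$, which has no sign and, contrary to your hope, cannot be ``reabsorbed under the smallness of $\epsilon_1$'': it is not bounded by $I$, $J_\pm$ or $\Phi$; your bootstrap carries no control whatsoever on $\pa_x m_\pm$ (and $m_\pm$, being a supremum of solutions, is at best Lipschitz); and all constants must be uniform in $\mu$, which may be arbitrarily large. This is exactly the difficulty the paper's comparison function is engineered to kill. It introduces the resolvent-smoothed quantities $g_\pm=(1\pm 2\mu B_0\cdot\na)^{-1}\rho_\pm$ (the kernel $\f{1}{2\mu}e^{-y/(2\mu)}$ in Step 1 of Lemma \ref{lem:com fun}), for which $\pm 2\mu B_0\cdot\na g_\pm=\rho_\pm-g_\pm$ is controlled \emph{with no loss of derivatives}, and takes the supersolution in the form $\rho^{01}_\pm+g^{01}_\pm h^1_\mp$, where $h^1_\mp$ is an antiderivative of $(\rho_\mp+g_\mp)/(2\epsilon_0)$, bounded by $1$ precisely because $\|\rho^0_\pm\|_{L^1}<\epsilon_0$. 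Then the transport cross term and the diffusion cross term combine algebraically, $\mp\big(2g^{01}_\pm\pm2\mu B_0\cdot\na g^{01}_\pm\big)\pa_xh^1_\mp=(g^{01}_\pm+\rho^{01}_\pm)(\rho^{11}_\mp+g^1_\mp)/(2\epsilon_0)\ge0$, i.e.\ the cross terms produce exactly the quadratic source required by \eqref{eq:local-com}, with the favorable sign. Your parenthetical alternative --- a ``constant-coefficient transport--diffusion barrier'' --- cannot do this: free caloric solutions satisfy the equation with zero right-hand side and therefore never dominate the quadratic coupling. Without this resolvent idea (or an equivalent $\mu$-uniform, derivative-free treatment of the diffusive cross terms) your Steps 3--4 do not close for $\mu>0$; for $\mu=0$ your interaction-functional argument is essentially sound, modulo the pressure correction above and a justification that $\sup_y$ of a subsolution is a subsolution in the viscosity or distributional sense.
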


Let us give some remarks on our result.

\begin{itemize}

\item[1.] We only require the initial data to decay at infinity in $B_0$ direction, which is the key for the global well-posedness in $\R\times\mathbb{T}^{d-1}$.

\item[2.] It is easy to check that $J_\pm(0)\le C\epsilon$ for the initial data considered in \cite{CL} and \cite{HXY}. Indeed, for the data in \cite{CL}, we have $\rho_{\pm}(0,X)\leq C\epsilon(1+|X|)^{-\delta} $. For the data in \cite{HXY},  we have $\rho_{\pm}(0,X)\leq C\epsilon(R^2+|X|^2)^{-\frac{1}{2}}(\ln(R^2+|X|^2))^{-2}. $ Here $\delta>1,\ R\geq 100 $ and $\epsilon>0$ is small.

\item[3.]  At a first glance, there seems no difference between $\R^d$ and $\R^k\times\T^{d-k}$. From the proofs in \cite{CL} and \cite{HXY}, it seems that the case of $d=3$ is easier than the case of $d=2$. However, for a solution $(z_+,z_-)\in C\big([0,+\infty),H^N(\R^2)\big)$,  $\big(z_+(t,x),0,z_-(t,x),0\big)$ is also a solution in $C\big([0,+\infty),H^N(\R^2\times\mathbb{T})\big)$. Thus, the case of $\R^2\times\T$ is not easier than the case of $\R^2$. In this sense, the case of $\R\times\T^{d-1}$ may be harder.

\end{itemize}

Throughout this paper, we denote by $C$ a constant independent
of $t,\mu$, which may be different from line to line.

\section{Local energy inequality}

In this section, we derive the following local energy inequalities
 \begin{align}\label{eq:local}
 \partial_t\rho_{\pm}\mp B_0\cdot\nabla\rho_{\pm}-\mu\triangle\rho_{\pm}\leq CF,\end{align}
 where $F(t,X)$ is given by
 \ben\label{def:F}
F(t,X)=\sum_{k+j\leq N+1,0\leq k,j\leq N}\big\||\nabla^{k}z_+||\nabla^{j}z_-| (t)\big\|_{L^2(B(X,2))}+\sum_{k=0}^N\big\|\nabla^{k+1} p(t)\big\|_{L^2(B(X,2))}.
\een
Here $B(X,r)$ denotes a ball in $\R^d$ with the center at $X$ and radius $r$.

We only prove \eqref{eq:local} for $\rho_{+} $, the case of $\rho_{-} $ is similar.  For any multi-index $a\in \mathbb{N}^d$, using the same notations as in \cite{CL}, one can easily deduce from \eqref{eq:MHD-f} that \begin{align*}
\left\{
\begin{array}{l}
\partial_t \nabla^az_+-\mu \Delta \nabla^az_+-B_0\cdot\nabla \nabla^az_++\dfrac{}{}\sum\limits_{b+c=a}C_a^b(\nabla^bz_-\cdot\nabla\nabla^cz_+)+\nabla\nabla^a p=0,\\
\partial_t \nabla^az_--\mu \Delta \nabla^az_-+B_0\cdot\nabla \nabla^az_-+\dfrac{}{}\sum\limits_{b+c=a}C_a^b(\nabla^bz_+\cdot\nabla\nabla^cz_-)+\nabla\nabla^a p=0,\\
\text{div} \nabla^az_+=\text{div}\nabla^az_-=0.
\end{array}\right.
\end{align*}
Taking inner product of the first equation with $\nabla^az_+ $, we obtain
\begin{align*}
\partial_t |\nabla^az_+|^2&-\mu \Delta |\nabla^az_+|^2+2\mu|\nabla\nabla^az_+|^2-B_0\cdot\nabla| \nabla^az_+|^2\nonumber\\&+2\sum\limits_{b+c=a}C_a^b(\nabla^bz_-\cdot\nabla\nabla^cz_+\nabla^az_+)+2\nabla^az_+\cdot\nabla\nabla^a p=0,
\end{align*}
from which, we deduce that
\begin{align}\label{eq:MHD-d2}
( \partial_t\rho_{+}^2-\mu\triangle\rho_{+}^2- B_0\cdot\nabla\rho_{+}^2)(t,X)+2\mu\int_{\R^d}\sum_{|a|\leq N}|\nabla\nabla^az_+|^2(t,Y)\theta(|X-Y|)dY\nonumber\\ +\dfrac{}{}2\int_{\R^d}\sum_{|a|\leq N}\sum\limits_{b+c=a}C_a^b(\nabla^bz_-\cdot\nabla\nabla^cz_+\nabla^az_+)(t,Y)\theta(|X-Y|)dY\nonumber\\+2\int_{\R^d}\sum_{|a|\leq N}\nabla^az_+\cdot\nabla\nabla^a p(t,Y)\theta(|X-Y|)dY=0.
\end{align}
Let $\rho_{+}^{(\varepsilon)}=(\rho_{+}^2+\varepsilon)^{\frac{1}{2}} $ for $ \varepsilon>0.$  We have
\begin{align*}&2(\rho_{+}^{(\varepsilon)}\nabla\rho_{+}^{(\varepsilon)})(t,X)=(\nabla\rho_{+}^2)(t,X)=2\int_{\R^d}\sum_{|a|\leq N}(\nabla\nabla^az_{+}\cdot\nabla^a z_{+})^2(t,Y)\theta(|X-Y|)dY\\ &\leq 2\left(\int_{\R^d}\sum_{|a|\leq N}|\nabla\nabla^az_{+}|^2(t,Y)\theta(|X-Y|)dY\int_{\R^d}\sum_{|a|\leq N}|\nabla^az_{+}|^2(t,Y)\theta(|X-Y|)dY\right)^{\frac{1}{2}}\\&=2\left(\int_{\R^d}\sum_{|a|\leq N}|\nabla\nabla^az_{+}|^2(t,Y)\theta(|X-Y|)dY\right)^{\frac{1}{2}}\rho_{+}(t,X),\end{align*}
which implies that
\begin{align*}
\int_{\R^d}\sum_{|a|\leq N}|\nabla\nabla^az_{+}|^2(t,Y)\theta(|X-Y|)dY\geq |\nabla\rho_{+}^{(\varepsilon)}(t,X)|^2.
\end{align*}
If $|a|\leq N,\ b+c=a,\ c<a$, then $|b|,|c|+1\leq N$ and
 \begin{align*}
 &-\int_{\R^d}(\nabla^bz_-\cdot\nabla\nabla^cz_+\nabla^az_+)(t,Y)\theta(|X-Y|)dY\\& \leq \left(\int_{B(X,2)}|\nabla^{|b|}z_-|^2|\nabla^{|c|+1}z_+|^2(t,Y)dY\int_{\R^d}|\nabla^az_{+}|^2(t,Y)\theta(|X-Y|)dY\right)^{\frac{1}{2}}\\ &\leq F(t,X)\rho_+(t,X).
 \end{align*}
If $|a|\leq N,\ b+c=a,\ c=a$, then $b=(0,\cdots,0)$ and
\begin{align*}
&-2\int_{\R^d}(\nabla^bz_-\cdot\nabla\nabla^cz_+\nabla^az_+)(t,Y)\theta(|X-Y|)dY\\
&=-\int_{\R^d}(z_-\cdot\nabla|\nabla^az_+|^2)(t,Y)\theta(|X-Y|)dY\\ &=-\int_{\R^d}z_-\cdot\nabla\theta(|X-Y|)|\nabla^az_+|^2(t,Y)dY\\ &\leq C\left(\int_{B(X,2)}|z_-|^2|\nabla^{|a|}z_+|^2(t,Y)dY\int_{\R^d}|\nabla^az_{+}|^2(t,Y)\theta(|X-Y|)dY\right)^{\frac{1}{2}}\\
&\leq CF(t,X)\rho_+(t,X) .\end{align*}
If $|a|\leq N,$ then
\begin{align*}
&-\int_{\R^d}\nabla^az_+\cdot\nabla\nabla^a p(t,Y)\theta(|X-Y|)dY\\
&\leq \left(\int_{B(X,2)}|\nabla^{|a|+1}p|^2(t,Y)dY\int_{\R^d}|\nabla^az_{+}|^2(t,Y)\theta(|X-Y|)dY\right)^{\frac{1}{2}}\\
&\leq F(t,X)\rho_+(t,X).
\end{align*}
Summing up, we obtain
\begin{align*}
( \partial_t\rho_{+}^2-\mu\triangle\rho_{+}^2- B_0\cdot\nabla\rho_{+}^2)(t,X)+2\mu|\nabla\rho_{+}^{(\varepsilon)}(t,X)|^2
\leq CF(t,X)\rho_+(t,X),
\end{align*}
which gives
\begin{align*}
 (\partial_t\rho_{+}^{(\varepsilon)}-\mu\triangle\rho_{+}^{(\varepsilon)}- B_0\cdot\nabla\rho_{+}^{(\varepsilon)})(t,X)\leq CF(t,X).
\end{align*}
Now, the local energy inequality \eqref{eq:local} follows by letting $\varepsilon\to0 $.\smallskip

Let us conclude this section by the following estimate for $F(t,X)$:
\begin{align}\label{eq:F-est}
F(t,X)\leq C\int_{\R^d}\frac{\rho_{+}(t,Y)\rho_{-}(t,Y)}{1+|X-Y|^{d+1}}dY.
\end{align}
We need the following fact.

\begin{lemma}\label{lem:local}
It holds that
\beno
\|f\|_{L^2(B(X,2))}\leq C\int_{B(X,3)}\|f\|_{L^2(B(Y,\frac{1}{2}))}dY.
\eeno
\end{lemma}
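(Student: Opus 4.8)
The plan is to pass from the single $L^2$ norm on the left to the integrated family of small-ball norms on the right by combining a finite covering of $B(X,2)$ with a ``spreading'' lower bound for the integrand. First I would fix points $Y_1,\dots,Y_M\in B(X,2)$, with $M$ depending only on $d$, such that the balls $B(Y_i,\tfrac14)$ cover $B(X,2)$; such a cover exists by a standard volume/Vitali argument, and $M$ is scale-free because all radii here are fixed. Since $\mathbf 1_{B(X,2)}\le\sum_i\mathbf 1_{B(Y_i,1/4)}$ pointwise, integrating $|f|^2$ gives $\|f\|_{L^2(B(X,2))}^2\le\sum_i\|f\|_{L^2(B(Y_i,1/4))}^2$, and then, using $\sum a_i^2\le(\sum a_i)^2$ for nonnegative $a_i$,
\[
\|f\|_{L^2(B(X,2))}\le\sum_{i=1}^M\|f\|_{L^2(B(Y_i,1/4))}.
\]

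Next I would dominate each term $\|f\|_{L^2(B(Y_i,1/4))}$ by the right-hand side of the lemma. The key elementary observation is that if $Y\in B(Y_i,\tfrac14)$, then $B(Y_i,\tfrac14)\subseteq B(Y,\tfrac12)$, because $|Z-Y|\le|Z-Y_i|+|Y_i-Y|<\tfrac14+\tfrac14=\tfrac12$ whenever $|Z-Y_i|<\tfrac14$. Hence $\|f\|_{L^2(B(Y,1/2))}\ge\|f\|_{L^2(B(Y_i,1/4))}$ for every such $Y$, and moreover $B(Y_i,\tfrac14)\subseteq B(X,3)$ since $Y_i\in B(X,2)$. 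Integrating the previous inequality over $Y\in B(Y_i,\tfrac14)$ therefore yields
\[
|B(0,\tfrac14)|\,\|f\|_{L^2(B(Y_i,1/4))}\le\int_{B(Y_i,1/4)}\|f\|_{L^2(B(Y,1/2))}\,dY\le\int_{B(X,3)}\|f\|_{L^2(B(Y,1/2))}\,dY.
\]
Summing over $i=1,\dots,M$ and inserting this into the covering inequality gives $\|f\|_{L^2(B(X,2))}\le \tfrac{M}{|B(0,1/4)|}\int_{B(X,3)}\|f\|_{L^2(B(Y,1/2))}\,dY$, which is the claim with $C=M/|B(0,\tfrac14)|$ depending only on $d$.

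The only real subtlety — and the point that dictates the radii — is that the right-hand side is an $L^1$ average of the local norms rather than an $L^2$ one, so a naive Cauchy--Schwarz estimate goes the wrong way. What rescues the argument is that the balls $B(Y,\tfrac12)$ overlap so heavily that a fixed small-ball norm $\|f\|_{L^2(B(Y_i,1/4))}$ is bounded below by $\|f\|_{L^2(B(Y,1/2))}$ \emph{uniformly} for $Y$ ranging over a full ball $B(Y_i,\tfrac14)$ of positive measure; integrating converts this pointwise domination into the desired $L^1$ lower bound. The radius $\tfrac14$ is chosen precisely so that the inclusion $B(Y_i,\tfrac14)\subseteq B(Y,\tfrac12)$ persists as $Y$ moves over $B(Y_i,\tfrac14)$, while keeping the enlarged balls inside $B(X,3)$. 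I expect no further difficulty: once the cover and the inclusion are in place, the rest is bookkeeping of dimensional constants.
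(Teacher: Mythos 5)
Your proof is correct, and it takes a genuinely different route from the paper's. You combine a finite covering of $B(X,2)$ by balls $B(Y_i,\tfrac14)$ with the observation that the inclusion $B(Y_i,\tfrac14)\subseteq B(Y,\tfrac12)$ persists for all $Y\in B(Y_i,\tfrac14)$, so that integrating the resulting pointwise domination over a ball of fixed positive measure converts it into the desired $L^1$ bound; every step (the volume bound on $M$, the inequality $\sum_i a_i^2\le(\sum_i a_i)^2$, the inclusion $B(Y_i,\tfrac14)\subseteq B(X,3)$) checks out. The paper instead argues by symmetrization and Cauchy--Schwarz: it starts from the Fubini-type identity
\begin{align*}
\int_{B(X,\frac{1}{2})}\|f\|_{L^2(B(X,2)\cap B(Y,3))}^2\,dY=\int_{B(X,3)}\|f\|_{L^2(B(X,2)\cap B(Y,\frac{1}{2}))}^2\,dY,
\end{align*}
valid because for each fixed point $Z$ the measure-preserving reflection $Y\mapsto X+Z-Y$ exchanges the two constraint sets, then bounds the right-hand integrand by $\|f\|_{L^2(B(X,2))}\|f\|_{L^2(B(Y,\frac12))}$, identifies the left side as $\frac{\omega_d}{2^d}\|f\|_{L^2(B(X,2))}^2$ (since $B(X,2)\cap B(Y,3)=B(X,2)$ when $Y\in B(X,\tfrac12)$), and finally divides through by $\|f\|_{L^2(B(X,2))}$. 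The paper's proof is shorter and exploits the symmetry of the configuration, but it implicitly requires setting aside the degenerate cases $\|f\|_{L^2(B(X,2))}=0$ or $=\infty$ before dividing; your covering argument needs no division at all, yields an explicit constant $C=M/|B(0,\tfrac14)|$, and transfers verbatim to $L^p$ norms in place of $L^2$, so it is somewhat longer but arguably more robust.
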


\begin{proof}
By Fubini theorem, we have
\begin{align*}
\int_{B(X,\frac{1}{2})}\|f\|_{L^2(B(X,2)\cap B(Y,3))}^2dY&=\int_{B(X,3)}\|f\|_{L^2(B(X,2)\cap B(Y,\frac{1}{2}))}^2dY\\
&\leq\int_{B(X,3)}\|f\|_{L^2(B(X,2))}\|f\|_{L^2(B(Y,\frac{1}{2}))}dY.
\end{align*}
As $ B(X,2)\cap B(Y,3)=B(X,2)$ for $Y\in B(X,\frac{1}{2})$, we infer that\begin{align*}
\frac{\omega_d}{2^d}\|f\|_{L^2(B(X,2))}^2\leq \|f\|_{L^2(B(X,2))}\int_{B(X,3)}\|f\|_{L^2(B(Y,\frac{1}{2}))}dY,
\end{align*}
where $ \omega_d$ is the volume of the unit ball in $\R^d$, and this gives the result.
\end{proof}

Now let us prove \eqref{eq:F-est}.
By Sobolev embedding, we have
$$\|\nabla^{k}z_{\pm}(t)\|_{L^{p_k}(B(X,1))}\leq C\|z_{\pm}(t)\|_{H^{N}(B(X,1))}\leq C\rho_{\pm}(t,X),$$
 where $ \dfrac{1}{p_k}=\dfrac{k-1}{2(N-1)}$ for $1\leq k\leq N$ and $ \dfrac{1}{p_k}=0$ for $k=0.$ Thus, for $k+j\leq N+1,0\leq k,j\leq N$, we have $\dfrac{1}{p_k}+ \dfrac{1}{p_j}\leq \frac{1}{2}$ and
 \begin{align*}
 \||\nabla^{k}z_{+}||\nabla^{j}z_{-}|(t)\|_{L^{2}(B(X,1))}&\leq C\|\nabla^{k}z_{+}(t)\|_{L^{p_k}(B(X,1))}\|\nabla^{j}z_{-}(t)\|_{L^{p_j}(B(X,1))}\\
 &\leq C\rho_{+}(t,X)\rho_{-}(t,X).
 \end{align*}
 Due to $\text{div}z_{\pm}=0,$ we infer from the first equation of (\ref{eq:MHD-f}) that
 $$-\triangle p=\partial_i(z^j_+\partial_jz^i_-)=\partial_iz^j_+\partial_jz^i_-=\partial_i\partial_j(z^j_+z^i_-).$$
 Then by the interior elliptic estimates, we get
 \begin{align*} &\|\nabla p(t)\|_{H^N(B(X,\frac{1}{2}))}\leq C\|\nabla p(t)\|_{L^{\infty}(B(X,1))}+C\|\triangle p(t)\|_{H^{N-1}(B(X,1))}\\
 &\leq C\|\nabla p(t)\|_{L^{\infty}(B(X,1))}+C\sum_{k+j\leq N-1,\ k,j\geq 0}\big\||\nabla^{k+1}z_{+}||\nabla^{j+1}z_{-}|(t)\big\|_{L^{2}(B(X,1))}\\
 &\leq C\|\nabla p(t)\|_{L^{\infty}(B(X,1))}+C\rho_{+}(t,X)\rho_{-}(t,X),\end{align*}
 from which and Lemma \ref{lem:local}, we infer that
 \begin{align}\label{p1}
 F(t,X) \leq C\|\nabla p(t)\|_{L^{\infty}(B(X,4))}+C\int_{B(X,3)}\rho_{+}(t,Y)\rho_{-}(t,Y)dY.
 \end{align}
 It remains to estimate $\|\nabla p(t)\|_{L^{\infty}(B(X,4))}$. For this, we use the following representation formula of the pressure $p(t,X)$:
 \begin{align*}
-\nabla p(t,X)= &\int_{\R^d}\nabla N(X-Y)\theta(|X-Y|)\triangle p(t,Y)dY\nonumber\\
&+\int_{\R^{d}}\partial_i\partial_j\Big(\nabla N(X-Y)(1-\theta(|X-Y|))\Big)
(z_+^jz_-^i)(t,Y)dY,
\end{align*}
where $N(X)$ is the Newton potential.
Thanks to $|\nabla^k N(X)|\leq C|X|^{2-d-k}$ and Sobolev embedding, we obtain
\begin{align*}
 |\nabla p(t,X)|&\leq C\int_{B(X,2)}\frac{dY}{|X-Y|^{d-1}}\|\triangle p(t)\|_{L^{\infty}(B(X,2))} + C\int_{\R^{d}}\frac{|z_+||z_-|(t,Y)dY}{1+|X-Y|^{d+1}}\\ &\leq C\|\triangle p(t)\|_{H^{N-1}(B(X,2))} + C\int_{\R^{d}}\frac{\rho_{+}(t,Y)\rho_{-}(t,Y))dY}{1+|X-Y|^{d+1}}.
 \end{align*}
 Notice that $\|\triangle p(t)\|_{H^{N-1}(B(Y,\frac{1}{2}))}\leq C\rho_{+}(t,Y)\rho_{-}(t,Y) $, which along with Lemma \ref{lem:local} gives
\begin{align*}
\|\triangle p(t)\|_{H^{N-1}(B(X,2))} \leq C\int_{B(X,3)}\rho_{+}(t,Y)\rho_{-}(t,Y)dY.
\end{align*}
This shows  that
\begin{align*}
|\nabla p(t,X)|\leq  C\int_{\R^{d}}\frac{\rho_{+}(t,Y)\rho_{-}(t,Y))dY}{1+|X-Y|^{d+1}}.
\end{align*}
Thanks to $ \dfrac{1}{1+|X'-Y|^{d+1}}\leq \dfrac{C}{1+|X-Y|^{d+1}}$ for $X'\in B(X,4)$, we have
\begin{align*}
 \|\nabla p(t)\|_{L^{\infty}(B(X,4))}\leq  C\int_{\R^{d}}\frac{\rho_{+}(t,Y)\rho_{-}(t,Y))dY}{1+|X-Y|^{d+1}}.
 \end{align*}
Inserting this into \eqref{p1},  we arrive at \eqref{eq:F-est}.

\section{Comparison principle}

It follows from \eqref{eq:local} and \eqref{eq:F-est} that
 \begin{align}\label{eq:local-2}
 \partial_t\rho_{\pm}\mp B_0\cdot\nabla\rho_{\pm}-\mu\triangle\rho_{\pm}\leq C_1(\rho_+\rho_-)*N_1,
 \end{align}
 where $N_1(X)=(1+|X|^{d+1})^{-1}$.

 To control $\rho_\pm$, we  establish the following comparison principle.

\begin{lemma}\label{lem:com}
Let $0\leq\rho_{\pm}^1 \in L^{\infty}\cap C^0([0,T)\times\R^d )$ satisfy
 \begin{align}\label{eq:local-com}
 \partial_t\rho_{\pm}^1\mp B_0\cdot\nabla\rho_{\pm}^1-\mu\triangle\rho_{\pm}^1\geq C_1(\rho_+^1\rho_-^1)*N_1.
 \end{align}
If $\rho_{\pm}(0)\leq\rho_{\pm}^1(0) $ in $\R^d$,  then $\rho_{\pm}\leq\rho_{\pm}^1 $ in $[0,T)\times\R^d$.
\end{lemma}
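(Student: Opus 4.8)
The plan is to reduce the coupled, nonlocal system to a single scalar Gr\"onwall inequality for the supremum of the difference, exploiting the good signs in the coupling. Set $w_\pm=\rho_\pm-\rho_\pm^1$ (with $w_\pm^+=\max(w_\pm,0)$); by hypothesis these are continuous, bounded on each slab $[0,t_0]\times\R^d$ with $t_0<T$, and $w_\pm(0)\le 0$. Subtracting \eqref{eq:local-com} from \eqref{eq:local-2} and using the algebraic identity
$$\rho_+\rho_--\rho_+^1\rho_-^1=\rho_-\,w_++\rho_+^1\,w_-,$$
I obtain
$$\partial_t w_\pm\mp B_0\cdot\nabla w_\pm-\mu\Delta w_\pm\le C_1\big[(\rho_-\,w_++\rho_+^1\,w_-)*N_1\big].$$
Since $\rho_-,\rho_+^1\ge 0$ and $N_1\ge 0$, the right-hand side is monotone increasing in the off-diagonal unknown: this cooperative (quasimonotone) structure is exactly what makes a comparison principle possible despite the coupling and the nonlocality.

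First I would freeze the coupling into a function of $t$ alone. Let $M$ be a common $L^\infty$-bound for $\rho_\pm,\rho_\pm^1$ on the fixed slab $[0,t_0]\times\R^d$, and let $K=\|N_1\|_{L^1}$, which is finite because $N_1(X)=(1+|X|^{d+1})^{-1}$ decays like $|X|^{-(d+1)}$ at infinity. Using $\rho_-\,w_+\le M w_+^+$, $\rho_+^1\,w_-\le M w_-^+$ together with Young's inequality for the convolution, the right-hand side is dominated pointwise by $2C_1MK\,m^+(t)$, where $m(t)=\max\big(\sup_X w_+(t,\cdot),\sup_X w_-(t,\cdot)\big)$ and $m^+=\max(m,0)$. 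Thus each $w_\pm$ now satisfies a genuinely \emph{scalar} inequality $\partial_t w_\pm\mp B_0\cdot\nabla w_\pm-\mu\Delta w_\pm\le\Lambda(t)$ with $\Lambda(t)=2C_1MK\,m^+(t)$, the nonlocal and cross terms having disappeared into the single scalar $m^+(t)$.

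Next I would apply a maximum principle to $u=w_\pm-\int_0^t\Lambda(s)\,ds$, which satisfies $\partial_t u\mp B_0\cdot\nabla u-\mu\Delta u\le 0$ with $u(0)\le 0$. The constant drift $\mp B_0\cdot\nabla$ is eliminated by the Galilean shift $\tilde u(t,X)=u(t,X\mp tB_0)$, after which $\tilde u$ is a bounded subsolution of the pure heat inequality $\partial_t\tilde u-\mu\Delta\tilde u\le 0$ (or, when $\mu=0$, of $\partial_t\tilde u\le 0$). The standard maximum principle for bounded subsolutions of the heat/transport equation on $\R^d$ then gives $u\le 0$, i.e. $\sup_X w_\pm(t,\cdot)\le\int_0^t\Lambda(s)\,ds=2C_1MK\int_0^t m^+(s)\,ds$ for both signs. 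Taking the maximum over the two signs closes the loop, $m^+(t)\le 2C_1MK\int_0^t m^+(s)\,ds$, and since $m^+(0)=0$, Gr\"onwall's inequality forces $m^+\equiv 0$ on $[0,t_0]$; hence $w_\pm\le 0$ there. As $t_0<T$ is arbitrary, this yields $\rho_\pm\le\rho_\pm^1$ on $[0,T)\times\R^d$.

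The main obstacle is that three difficulties occur simultaneously — the coupling between $\rho_+$ and $\rho_-$, the nonlocal convolution, and the unbounded domain carrying a nonzero first-order drift — so no textbook scalar maximum principle applies directly. The single device that resolves all three is the passage to the scalar quantity $m^+(t)$: the signs of $\rho_-,\rho_+^1,N_1$ make the coupling cooperative, integrability of $N_1$ turns the nonlocal term into a multiple of $m^+(t)$, and the Galilean shift plus the bounded-solution heat maximum principle handle the domain and the drift. A secondary technical point to address is regularity: $\rho_\pm=(\rho_\pm^2)^{1/2}$ is only Lipschitz and its inequality \eqref{eq:local-2} must be read in the sense already justified in Section~2 through the regularization $\rho_{+}^{(\varepsilon)}$, so the maximum principle should be invoked in the corresponding distributional/viscosity sense, or after mollification followed by $\varepsilon\to 0$.
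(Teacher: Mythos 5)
Your proposal is correct and follows essentially the same route as the paper's proof: subtract the two inequalities, use the sign structure and the bilinear decomposition $\rho_+\rho_--\rho_+^1\rho_-^1=\rho_-(\rho_+-\rho_+^1)+\rho_+^1(\rho_--\rho_-^1)$, control the convolution by Young's inequality with $\|N_1\|_{L^1}$, invoke the maximum principle for the drift--diffusion operator to get a Duhamel-type integral bound on the positive parts, and close with Gr\"onwall. The only differences are bookkeeping (you merge the two sup-norms into the single scalar $m^+(t)$ and make the maximum-principle step explicit via a Galilean shift, while the paper keeps both norms and cites the maximum principle directly), plus your welcome remark on the weak/regularized sense in which \eqref{eq:local-2} must be read.
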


 \begin{proof}
 It follows from \eqref{eq:local-2} and \eqref{eq:local-com} that
 \begin{align*}
 \partial_t(\rho_{\pm}-\rho_{\pm}^1)\mp B_0\cdot\nabla(\rho_{\pm}-\rho_{\pm}^1)-\mu\triangle(\rho_{\pm}-\rho_{\pm}^1)\leq C_1(\rho_+\rho_--\rho_+^1\rho_-^1)*N_1.
 \end{align*}
 Let $f^{+}=\max(f,0)$.  By maximum principle, we deduce that for $t\in [0,T)$,
 \begin{align*}
 \|(\rho_{\pm}-\rho_{\pm}^1)^{+}(t)\|_{L^{\infty}(\R^d)}&\leq \int_0^tC_1\left\|\left( (\rho_+\rho_--\rho_+^1\rho_-^1)(s)*N_1\right)^+\right\|_{L^{\infty}(\R^d)}ds\\ &\leq \int_0^tC_1\left\| (\rho_+\rho_--\rho_+^1\rho_-^1)^+(s)*N_1\right\|_{L^{\infty}(\R^d)}ds\\ &\leq C\int_0^t\left\| (\rho_+\rho_--\rho_+^1\rho_-^1)^+(s)\right\|_{L^{\infty}(\R^d)}ds\\ &\leq CM\int_0^t\left(\left\| (\rho_+-\rho_+^1)^+(s)\right\|_{L^{\infty}(\R^d)}+\left\| (\rho_--\rho_-^1)^+(s)\right\|_{L^{\infty}(\R^d)}\right)ds,
 \end{align*}
where $M=\|\rho_++\rho_-^1\|_{L^{\infty}([0,T)\times\R^d)}<+\infty$. This implies that $ \|(\rho_{\pm}-\rho_{\pm}^1)^{+}(t)\|_{L^{\infty}(\R^d)}=0,$ hence, $\rho_{\pm}(t)\leq \rho_{\pm}^1(t)$ for $t\in [0,T)$.
\end{proof}

To construct comparison functions $\rho_\pm^1$,  we need the following observation.

\begin{lemma}\label{lem:N1}
There exists an absolute constant $C_0>1$ so that
\beno
C_0^{-1}N_1\leq N_1*N_1\leq C_0 N_1,\quad \rho_{\pm}(0)\leq C_0\rho_{\pm}(0)*N_1.
\eeno
Here $\|N_1\|_{L^1(\R^d)}\le C_0$.
\end{lemma}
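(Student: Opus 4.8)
The plan is to establish the three claimed inequalities separately, since they are essentially independent estimates about the specific kernel $N_1(X)=(1+|X|^{d+1})^{-1}$ and its behavior under convolution. The overarching strategy is to exploit the fact that $N_1$ is a fixed, integrable, radially decreasing function with polynomial decay of order exactly $d+1$, so that convolving it with itself preserves this decay rate up to absolute constants. No information about the solution $\rho_\pm$ enters except monotonicity and nonnegativity, so all constants here are genuinely absolute.

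First I would verify $\|N_1\|_{L^1(\R^d)}\le C_0$. Writing the integral in polar coordinates gives $\int_{\R^d}(1+|X|^{d+1})^{-1}dX = \omega_d\, d\int_0^\infty r^{d-1}(1+r^{d+1})^{-1}dr$, which converges since near infinity the integrand behaves like $r^{d-1-(d+1)}=r^{-2}$. Thus $N_1\in L^1$ with a finite absolute bound, which I rename $C_0$ at the end after taking the maximum over all three constants.

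Next I would prove the two-sided bound $C_0^{-1}N_1\le N_1*N_1\le C_0N_1$. The upper bound is the heart of the matter and the main obstacle: I must show $(N_1*N_1)(X)\le C N_1(X)$ uniformly in $X$, i.e. that self-convolution does not fatten the tail. The standard approach is to split $\R^d$ into the two regions $\{|Y|\le |X|/2\}$ and $\{|Y|\ge |X|/2\}$ (equivalently $|X-Y|\le|X|/2$) in the integral $\int N_1(Y)N_1(X-Y)dY$. On the first region $|X-Y|\ge|X|/2$, so $N_1(X-Y)\le C N_1(X)$ and the remaining factor integrates to $\|N_1\|_{L^1}$; on the second region one symmetrically bounds $N_1(Y)\le CN_1(X)$. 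For small $|X|$ one simply uses $N_1*N_1\le \|N_1\|_\infty\|N_1\|_{L^1}\le C\le CN_1(X)$ since $N_1(X)\ge N_1(0)/C$ there. The lower bound $N_1*N_1\ge C_0^{-1}N_1$ follows because, restricting the convolution integral to $Y\in B(0,1)$ where $N_1(Y)\ge c>0$, one gets $(N_1*N_1)(X)\ge c\int_{B(0,1)}N_1(X-Y)dY\ge c' N_1(X)$, using that $N_1(X-Y)$ and $N_1(X)$ are comparable for $|Y|\le 1$.

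Finally I would establish $\rho_\pm(0)\le C_0\,\rho_\pm(0)*N_1$. The subtlety is that $\rho_\pm(0)$ is an arbitrary nonnegative continuous function, so this cannot follow from integrability alone; instead it rests on the regularity built into $\rho_\pm$ via its definition \eqref{rho1}, which controls how fast $\rho_\pm$ can vary spatially. Concretely, the mollified quantity $\rho_\pm(X)^2$ is an average of $\sum_k|\nabla^k z_\pm|^2$ over a unit ball, so $\rho_\pm$ is Lipschitz (or at least uniformly continuous with a controlled modulus) and satisfies a local doubling/comparability property: $\rho_\pm(X-Y)\ge c\,\rho_\pm(X)$ for $|Y|$ bounded by a fixed radius. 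Granting such a comparability, I restrict the convolution to that fixed small ball where $N_1(Y)\ge c>0$ and obtain $(\rho_\pm(0)*N_1)(X)\ge c\int_{|Y|\le r}\rho_\pm(0,X-Y)dY\ge c'\rho_\pm(0,X)$, which is the desired bound after adjusting constants. After proving all three inequalities I would take $C_0$ to be the maximum of the constants arising, which proves the lemma.
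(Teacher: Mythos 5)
Your first two inequalities are fine: the $L^1$ bound by polar coordinates, and the two--sided convolution bound, where your splitting of the integral over $\{|Y|\le |X|/2\}$ and $\{|Y|\ge |X|/2\}$ is just a rewriting of the paper's observation that $\min\bigl(N_1(Y),N_1(X-Y)\bigr)\le CN_1(X)$; the lower bound via restriction to a unit ball is the paper's argument verbatim. The gap is in the third inequality, $\rho_{\pm}(0)\le C_0\,\rho_{\pm}(0)*N_1$. You rightly observe that this cannot follow from integrability alone and must use the structure of $\rho_\pm$, but the property you then invoke --- a pointwise doubling bound $\rho_{\pm}(0,X-Y)\ge c\,\rho_{\pm}(0,X)$ for $|Y|\le r$ --- is false. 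By \eqref{rho1}, $\rho_{\pm}(0,X)^2$ is an average of $\sum_k|\nabla^k z_\pm|^2$ against the cutoff $\theta(|X-\cdot|)$, which vanishes outside $B(X,2)$. Take $z_\pm(0)$ with its $H^N$ mass concentrated in a tiny ball around a point $W_0$; then $\rho_{\pm}(0,X)^2\approx M\,\theta(|X-W_0|)$, which is positive for $|X-W_0|<2$ and identically zero for $|X-W_0|\ge 2$. Choosing $X$ with $|X-W_0|=2-\epsilon$ and $Y=-\delta\,(X-W_0)/|X-W_0|$ with $\epsilon<\delta\le r$ gives $\rho_{\pm}(0,X-Y)=0$ while $\rho_{\pm}(0,X)>0$, so no constant $c>0$ can work. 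The Lipschitz continuity of $\rho_\pm$ that you appeal to does hold (with constant of order $\|z_\pm(0)\|_{H^N}$, using $|\theta'|^2\le C\theta$), but Lipschitz regularity yields only the additive bound $\rho_{\pm}(0,X-Y)\ge \rho_{\pm}(0,X)-C|Y|$, which says nothing multiplicatively exactly where $\rho_\pm$ is small --- near the edge of the averaging window and at spatial infinity, which is the regime that matters here.

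The correct mechanism, which is the paper's, replaces pointwise comparability by an \emph{integral} one, passing through the underlying function $z_\pm$. Since $\theta\le 1$ is supported in $[0,2]$ and $\theta\equiv 1$ on $[0,1]$, one has the two-sided comparison
\begin{equation*}
\rho_{\pm}(0,X)\le \|z_{\pm}(0)\|_{H^N(B(X,2))},\qquad \|z_{\pm}(0)\|_{H^N(B(Y,\frac12))}\le \rho_{\pm}(0,Y).
\end{equation*}
Combining these with the Fubini-type covering inequality of Lemma \ref{lem:local},
$\|f\|_{L^2(B(X,2))}\le C\int_{B(X,3)}\|f\|_{L^2(B(Y,\frac12))}\,dY$,
gives $\rho_{\pm}(0,X)\le C\int_{B(X,3)}\rho_{\pm}(0,Y)\,dY\le C\,\rho_{\pm}(0)*N_1(X)$, the last step because $N_1\ge (1+3^{d+1})^{-1}$ on $B(X,3)$. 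In short: what the definition of $\rho_\pm$ actually provides is that the local $H^N$ norm on a large ball is controlled by the \emph{integral} of local $H^N$ norms over small balls (an averaging statement), not that $\rho_\pm$ is pointwise comparable to its translates; your argument needs to be rerouted through this lemma to close.
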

\begin{proof}
As $N_1(X)\leq CN_1(Y)$ for $|X-Y|\leq1$, we have
\begin{align*}
N_1*N_1(X)\geq\int_{B(X,1)}N_1(Y)N_1(X-Y)dY\geq C^{-1}N_1(X).\end{align*}
Using the fact that
$$
\min(N_1(Y),N_1(X-Y))\leq CN_1(X),
$$
we infer that
\begin{align*}
N_1*N_1(X)=&\int_{\R^d}N_1(Y)N_1(X-Y)dY\\= &\int_{\R^d}\min(N_1(Y),N_1(X-Y))\max(N_1(Y),N_1(X-Y))dY\\ \leq & \int_{\R^d}CN_1(X)(N_1(Y)+N_1(X-Y))dY=2CN_1(X)\|N_1\|_{L^1(\R^d)},\end{align*} which gives the first inequality.

Using $\|z_{\pm} (0)\|_{H^{N}(B(Y,\frac{1}{2}))} \leq C\rho_{\pm}(0,Y) $ and Lemma \ref{lem:local}, we deduce that
\begin{align*}
\rho_{\pm}(0,X)\leq\|z_{\pm} (0)\|_{H^{N}(B(X,2))}\leq C\int_{B(X,3)}\rho_{\pm}(0,Y)dY\leq C\rho_{\pm}(0)*N_1(X),
\end{align*}
which gives the second inequality.
\end{proof}

Now let us construct the comparison function.

\begin{lemma}\label{lem:com fun}
Let $\rho_{\pm}^0 \in L^1\cap C(\R),\ \rho_{\pm}^{00}\in L^2\cap L^{\infty}\cap C(\R^k\times \T^{d-k})$. Assume that
\beno
&&0\leq\rho_{\pm}^{00}(x,y)\leq\rho_{\pm}^0(x)\quad\text{for}\,\, x\in\R,\,y\in\R^{d-1},\\
&&C_0^{-1}\rho_{\pm}^{00}\leq \rho_{\pm}^{00}*N_1\leq C_0\rho_{\pm}^{00}.
\eeno
Then there exists $\epsilon_0>0$ depending only on $C_0,C_1$ such that if $\|\rho_{\pm}^0\|_{L^1(\R)}<\epsilon_0 $, then there exists $0\leq \rho_{\pm}^1 \in L^{\infty}\cap C\big([0,+\infty)\times\R^d\big) $ satisfying \eqref{eq:local-com} and $\rho_{\pm}^{00}\leq\rho_{\pm}^1(0)$. Moreover,
\beno
 \|\rho_{\pm}^1(t)\|_{L^2(\R^k\times \T^{d-k})}\leq C\|\rho_{\pm}^{00}\|_{L^2(\R^k\times \T^{d-k})}.
 \eeno
\end{lemma}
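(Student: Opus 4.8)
The plan is to build an explicit supersolution by transporting a heat‑regularized version of the initial profile. Concretely, I would set
\[
\rho_\pm^1(t,x,y)=\Theta_\pm(t)\,\big(e^{t\mu\Delta}\rho_\pm^{00}\big)(x\pm t,\,y),
\]
where $\Theta_\pm:[0,\infty)\to[1,2]$ are increasing amplitudes with $\Theta_\pm(0)=1$ to be fixed at the end. The point of the two ingredients is that they diagonalize the left‑hand side of \eqref{eq:local-com}: writing $u_\pm(t,\xi,y)=\Theta_\pm(t)(e^{t\mu\Delta}\rho_\pm^{00})(\xi,y)$ so that $\rho_\pm^1(t,x,y)=u_\pm(t,x\pm t,y)$, the shift $\xi=x\pm t$ annihilates the transport term $\mp B_0\cdot\nabla$, while $e^{t\mu\Delta}$ satisfies the heat equation so that $\partial_t u_\pm-\mu\Delta u_\pm=\dot\Theta_\pm\,e^{t\mu\Delta}\rho_\pm^{00}$. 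Hence the left side of \eqref{eq:local-com} equals exactly $\dot\Theta_\pm(t)\,(e^{t\mu\Delta}\rho_\pm^{00})(x\pm t,y)$. Positivity, continuity and boundedness of $\rho_\pm^1$ follow from the corresponding properties of $\rho_\pm^{00}\in L^\infty\cap C$ and the positivity/continuity of the heat semigroup; at $t=0$ one has $\rho_\pm^1(0)=\rho_\pm^{00}$, so $\rho_\pm^{00}\le\rho_\pm^1(0)$; and since $e^{t\mu\Delta}$ is an $L^2$‑contraction and $\Theta_\pm\le2$, the bound $\|\rho_\pm^1(t)\|_{L^2(\R^k\times\T^{d-k})}\le2\|\rho_\pm^{00}\|_{L^2(\R^k\times\T^{d-k})}$ holds automatically.

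Everything then reduces to a pointwise reaction estimate. Writing $P_\pm(t,\cdot)=e^{t\mu\Delta}\rho_\pm^{00}$, inequality \eqref{eq:local-com} becomes $\dot\Theta_\pm\,P_\pm(t,x\pm t,y)\ge C_1(\rho_+^1\rho_-^1)*N_1$. Two structural facts are available for free: first, since convolutions commute and $e^{t\mu\Delta}$ is order preserving, $P_\pm$ inherits from $\rho_\pm^{00}$ the comparability $C_0^{-1}P_\pm\le P_\pm*N_1\le C_0P_\pm$ for all $t$ (and this persists under the constant shift $x\mapsto x\pm t$); second, from $\rho_\pm^{00}(x,y)\le\rho_\pm^0(x)$ one gets $P_\pm(t,x,y)\le Q_\pm(t,x):=(e^{t\mu\partial_{xx}}\rho_\pm^0)(x)$, a one–dimensional heat flow with conserved mass $\|Q_\pm(t)\|_{L^1(\R)}=\|\rho_\pm^0\|_{L^1(\R)}$. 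I would then prove, for the $+$ equation,
\[
(\rho_+^1\rho_-^1)*N_1(t,x,y)\le b(t)\,\Theta_+(t)\Theta_-(t)\,P_+(t,x+t,y),\qquad \int_0^\infty b(t)\,dt\le C\max_{\pm}\|\rho_\pm^0\|_{L^1(\R)},
\]
and the symmetric statement for the $-$ equation. Granting this, I choose $\Theta_+=\Theta_-=\Theta$ solving the Riccati ODE $\dot\Theta=C_1 b(t)\Theta^2$, $\Theta(0)=1$; integrating gives $1-\Theta(t)^{-1}=C_1\int_0^t b\le CC_1\max_\pm\|\rho_\pm^0\|_{L^1}$, which is $<\tfrac12$ once $\|\rho_\pm^0\|_{L^1}<\epsilon_0$ with $\epsilon_0=\epsilon_0(C_0,C_1)$ small, so $\Theta\le2$ as required, and by construction $\dot\Theta\,P_+=C_1 b\Theta^2 P_+\ge C_1(\rho_+^1\rho_-^1)*N_1$.

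The reaction estimate is the main obstacle. After substituting $\rho_-^1\le\Theta_- Q_-$ and changing variables, its right side is dominated by
\[
\Theta_+\Theta_-\int_{\R\times\R^{d-1}}P_+(t,v_1+t,v')\,Q_-(t,v_1-t)\,N_1(x-v_1,\,y-v')\,dv_1\,dv'.
\]
The mechanism is that, in the $B_0$‑variable, $P_+(t,\cdot+t,\cdot)$ concentrates near $v_1\approx -t$ while $Q_-(t,\cdot-t)$ concentrates near $v_1\approx +t$, because the two fields are transported in opposite directions; since $Q_-$ is the heat flow of an $L^1$ mass moving along the opposing characteristic $v_1-t$, its relevant contribution is integrable in time with total size $\|\rho_-^0\|_{L^1}$, which is the source of the bound on $\int_0^\infty b$. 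The delicate point—and the step I expect to be hardest—is that \eqref{eq:local-com} must hold \emph{pointwise} in $(x,y)$, including where $P_+(t,x+t,y)$ is small, so one cannot merely bound $Q_-$ by its supremum (that rate is not time‑integrable). Instead I would use the comparability $P_+*N_1\le C_0P_+$ to show that the $N_1$‑smearing of $P_+$ against the far‑away, small factor $Q_-$ is still controlled pointwise by $P_+(t,x+t,y)$ itself—i.e. $P_+$ has no deep holes relative to its $N_1$‑average—and combine this ``no‑holes'' property with the time‑integrable separation factor to obtain the displayed estimate with $\int_0^\infty b\lesssim\|\rho_-^0\|_{L^1}$.
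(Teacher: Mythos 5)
Your construction fails at precisely the step you identified as the hardest, and the failure is not a technical gap but an obstruction to the ansatz itself: a single amplitude-modulated traveling profile $\rho_\pm^1(t,x,y)=\Theta_\pm(t)\,(e^{t\mu\Delta}\rho_\pm^{00})(x\pm t,y)$ with bounded $\Theta_\pm$ can never be a supersolution of \eqref{eq:local-com} for all time. To see this concretely, take $\mu=0$ (allowed here, and the case where constants must be uniform anyway), $k=d$, and $\rho_\pm^{00}=\epsilon N_1$, $\rho_\pm^{0}(x)=\epsilon(1+|x|^{d+1})^{-1}$; all hypotheses of the lemma hold by Lemma \ref{lem:N1}, with $\|\rho_\pm^0\|_{L^1(\R)}\le C\epsilon$ arbitrarily small. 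Then $\rho_\pm^1(t,x,y)=\Theta_\pm(t)\,\epsilon N_1(x\pm t,y)$, and we evaluate \eqref{eq:local-com} for the $+$ sign at the moving point $(x,y)=(t,0)$, i.e.\ at the center of the $-$ wave. The left-hand side equals $\dot\Theta_+(t)\,\epsilon N_1(2t,0)\le C\dot\Theta_+(t)\,\epsilon\, t^{-(d+1)}$ for $t\ge1$, while restricting the convolution to the unit ball $B((t,0),1)$, on which $\rho_-^1\ge c\,\epsilon$ and $\rho_+^1\ge c\,\epsilon\, t^{-(d+1)}$, gives
\begin{align*}
C_1(\rho_+^1\rho_-^1)*N_1\big(t,(t,0)\big)\;\ge\; c\,C_1\,\epsilon^2\,t^{-(d+1)}.
\end{align*}
Hence the supersolution property forces $\dot\Theta_+(t)\ge c\,C_1\epsilon$ for all $t\ge 1$, so $\Theta_+(t)\ge 1+c\,C_1\epsilon\,(t-1)\to\infty$, contradicting $\Theta_+\le 2$; in your notation, $b(t)\ge c\,\epsilon$ uniformly in $t$, so $\int_0^\infty b=\infty$ no matter how small the data. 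The ``no-holes'' property cannot rescue this: the problem is not that $P_+$ has holes relative to its $N_1$-average (it keeps the comparability constant $C_0$ for all time), but that the opposing wave sits exactly where $P_+$ is genuinely polynomially small, and there the ratio $\big((\rho_+^1\rho_-^1)*N_1\big)/\rho_+^1$ is of the order of the \emph{amplitude} of the $-$ wave, which does not decay at all for $\mu=0$ (and decays only like $(\mu t)^{-k/2}$, non-integrable for $k=1$ and non-uniform in $\mu$, when $\mu>0$).

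What is missing is an extra term in the comparison function that generates a source concentrated on the opposing characteristic. This is exactly what the paper's construction provides: it takes $\rho_\pm^{1}=C_0\rho_\pm^{10}*N_1$ with $\rho_\pm^{10}=\rho_\pm^{01}+g_\pm^{01}h_\mp^1$, where $g_\pm^{01}$ is a smoothed copy of the transported profile (chosen so that $g_\pm^{01}\pm2\mu B_0\cdot\nabla g_\pm^{01}=\rho_\pm^{01}$), and $h_\mp^1(t,x)\in[0,1)$ is the normalized cumulative mass function of the \emph{opposite} one-dimensional majorant, transported in the opposite direction and satisfying $\pm\partial_x h_\pm^1=(\rho_\pm^{11}+g_\pm^1)/(2\epsilon_0)$. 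Applying $\partial_t\mp B_0\cdot\nabla-\mu\Delta$ to the cross term $g_\pm^{01}h_\mp^1$ produces, through $\partial_x h_\mp^1$, a nonnegative source of size $(\rho_\pm^{01}+g_\pm^{01})(\rho_\mp^{01}+g_\mp^{01})/(2\epsilon_0)$ located exactly where the opposing wave currently is --- precisely the reaction term that must be dominated, with the large factor $1/(2\epsilon_0)$ supplied by the smallness of the data rather than by a decaying amplitude; meanwhile $h_\mp^1<1$ keeps the $L^\infty$ and $L^2$ bounds intact. Any repair of your argument would have to introduce such a wake/step term (or some other spatially localized enrichment of the ansatz); adjusting the scalar amplitudes $\Theta_\pm(t)$, however cleverly, cannot work.
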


 \begin{proof}
 {\bf Step 1.} Construction of the data

 For $\mu=0$, we set $g_{\pm}^{00}=\rho_{\pm}^{00},\ g_{\pm}^{0}=\rho_{\pm}^{0}$. For $\mu>0$, we set
 \begin{align*}
 g_{\pm}^{00}(X)=\frac{1}{2\mu}\int_0^{+\infty}e^{-\frac{y}{2\mu}}\rho_{\pm}^{00}(X\mp B_0y)dy,\quad g_{\pm}^{0}(x)=\frac{1}{2\mu}\int_0^{+\infty}e^{-\frac{y}{2\mu}}\rho_{\pm}^{0}(x\mp y)dy.
 \end{align*}
 It is easy to check that
 \beno
 &&g_{\pm}^{0}\pm 2\mu \partial_xg_{\pm}^{0}=\rho_{\pm}^0,\quad g_{\pm}^{00}\pm 2\mu B_0\cdot\nabla g_{\pm}^{00}=\rho_{\pm}^{00},\\
 &&\|g_{\pm}^0\|_{L^1(\R)}=\|\rho_{\pm}^0\|_{L^1(\R)},\quad \|g_{\pm}^{00}\|_{L^{\infty}(\R^d)}\leq\|\rho_{\pm}^{00}\|_{L^{\infty}(\R^d)},
 \eeno
 and $0\leq g_{\pm}^{00}(x,y)\leq g_{\pm}^0(x)$ for $x\in\R,y\in\R^{d-1}.$

 Let
 \begin{align*}
 \ h_{\pm}^{0}(x)=\frac{1}{2\epsilon_0}\int_0^{+\infty}(\rho_{\pm}^{0}+g_{\pm}^0)(x\mp y)dy.
 \end{align*}
 Then we have
 \beno
 0\leq  h_{\pm}^{0}<1,\quad \pm\partial_xh_{\pm}^{0}=(\rho_{\pm}^{0}+g_{\pm}^0)/(2\epsilon_0).
 \eeno

 {\bf Step 2.} Construction of comparison function

 Let $(\rho_{\pm}^{01},g_{\pm}^{01})(t,X)$ be the solution to
 \begin{align*}
 \partial_tf\mp B_0\cdot\nabla f-\mu\triangle f=0,\quad  f(0,X)=(\rho_{\pm}^{00},g_{\pm}^{00})(X),
 \end{align*}
 and $(\rho_{\pm}^{11},g_{\pm}^{1},h_{\pm}^{1})(t,x)$ be the solution to  \begin{align*}
 \partial_tf\mp \partial_x f-\mu\partial_x^2 f=0,\quad f(0,x)=(\rho_{\pm}^{0},g_{\pm}^{0},h_{\pm}^{0})(x).
 \end{align*}
 Thanks to the construction of the data, we find that
 \beno
 &&0\leq  h_{\pm}^{1}<1,\quad \pm\partial_xh_{\pm}^{1}=(\rho_{\pm}^{11}+g_{\pm}^1)/(2\epsilon_0),\\
 &&g_{\pm}^{01}\pm 2\mu B_0\cdot\nabla g_{\pm}^{01}=\rho_{\pm}^{01},\\
 &&0\leq \rho_{\pm}^{01}(t,x,y)\leq \rho_{\pm}^{11}(t,x),\quad 0\leq g_{\pm}^{01}(t,x,y)\leq g_{\pm}^1(t,x).
 \eeno

 Now we take $\rho_{\pm}^{1}(t)=C_0\rho_{\pm}^{10}(t)*N_1,$
 where
 \beno
  \rho_{\pm}^{10}(t,X)=\rho_{\pm}^{01}(t,X)+g_{\pm}^{01}(t,X)h_{\mp}^1(t,x).
  \eeno

{\bf Step 3.} Verification of  the conditions

By our construction, it is easy to check that
\begin{align*}
 &\partial_t\rho_{\pm}^{10}\mp B_0\cdot\nabla \rho_{\pm}^{10}-\mu\triangle \rho_{\pm}^{10}=\big(\partial_t\rho_{\pm}^{00}\mp B_0\cdot\nabla \rho_{\pm}^{00}-\mu\triangle \rho_{\pm}^{00}\big)\\
 &\quad+(\partial_tg_{\pm}^{01}\mp B_0\cdot\nabla g_{\pm}^{01}-\mu\triangle g_{\pm}^{01})h_{\mp}^{1}+g_{\pm}^{01}\big(\partial_th_{\mp}^{1}\mp \partial_x h_{\mp}^{1}-\mu\partial_x^2 h_{\mp}^{1}\big)\\
 &\quad-2\mu\ B_0\cdot\nabla g_{\pm}^{01}\partial_x h_{\mp}^{1}\\
 &=0+0\mp 2g_{\pm}^{01}\partial_x h_{\mp}^{1}-2\mu\ B_0\cdot\nabla g_{\pm}^{01}\partial_x h_{\mp}^{1}\\
 &=\mp (2g_{\pm}^{01}\pm2\mu\ B_0\cdot\nabla g_{\pm}^{01})\partial_x h_{\mp}^{1}\\
 &=(g_{\pm}^{01}+ \rho_{\pm}^{01})(\rho_{\mp}^{11}+g_{\mp}^1)/(2\epsilon_0)\geq (g_{\pm}^{01}+ \rho_{\pm}^{01})(\rho_{\mp}^{01}+g_{\mp}^{01})/(2\epsilon_0),
 \end{align*}
 which implies that
 \begin{align}
 \partial_t\rho_{\pm}^{1}\mp B_0\cdot\nabla \rho_{\pm}^{1}-\mu\triangle \rho_{\pm}^{1}\geq C_0/(2\epsilon_0)\big((g_{+}^{01}+ \rho_{+}^{01})(\rho_{-}^{01}+g_{-}^{01})\big) *{N_1}.\label{eq:rho-1}
 \end{align}
 As $0\leq  h_{\pm}^{1}<1$, we have
 \beno
 \rho_{\pm}^{1}(t)\leq C_0\rho_{\pm}^{01}(t)*N_1+C_0g_{\pm}^{01}(t)*N_1.
 \eeno
 Since $\rho_{\pm}^{01}(t) $ and $\rho_{\pm}^{01}(t)*N_1 $ satisfy $ \partial_tf\mp B_0\cdot\nabla f-\mu\triangle f=0$ and $\rho_{\pm}^{01}(0)*N_1=\rho_{\pm}^{00}*N_1\leq C_0\rho_{\pm}^{00}=C_0\rho_{\pm}^{01}(0)$, we conclude that $\rho_{\pm}^{01}(t)*N_1\leq C_0\rho_{\pm}^{01}(t).$
 Thanks to $g_{\pm}^{00}*N_1\leq C_0g_{\pm}^{00}$, we similarly have $g_{\pm}^{01}(t)*N_1\leq C_0g_{\pm}^{01}(t)$. Thus,
 \beno
 0\leq \rho_{\pm}^{1}(t)\leq C_0^2\big(\rho_{\pm}^{01}(t)+g_{\pm}^{01}(t)\big),
 \eeno
 which along with \eqref{eq:rho-1} gives
 \begin{align*}
 \partial_t\rho_{\pm}^{1}\mp B_0\cdot\nabla \rho_{\pm}^{1}-\mu\triangle \rho_{\pm}^{1}\geq ( \rho_{+}^{1}\rho_{-}^{1})(t) *{N_1/(2\epsilon_0 C_0^3)}.
 \end{align*}
 Taking $\epsilon_0=1/(2C_0^3C_1) $, we find that  $0\leq \rho_{\pm}^1\in L^{\infty}\cap C\big([0,+\infty)\times\R^d\big)$ satisfies \eqref{eq:local-com}. As $\rho_{\pm}^{10}(0)\geq \rho_{\pm}^{01}(0)=\rho_{\pm}^{00},$ we have $\rho_{\pm}^{1}(0)=C_0\rho_{\pm}^{10}(0)*N_1\geq C_0\rho_{\pm}^{00}*N_1\geq \rho_{\pm}^{00}.$

By standard energy estimate, we can deduce that
\beno
&&\|\rho_{\pm}^{01}(t)\|_{L^2(\R^k\times \T^{d-k})}\leq \|\rho_{\pm}^{01}(0)\|_{L^2(\R^k\times \T^{d-k})}=\|\rho_{\pm}^{00}\|_{L^2(\R^k\times \T^{d-k})}, \\
&&\|g_{\pm}^{01}(t)\|_{L^2(\R^k\times \T^{d-k})}\leq \|g_{\pm}^{00}\|_{L^2(\R^k\times \T^{d-k})}.
\eeno
For $\mu=0$, $\|g_{\pm}^{00}\|_{L^2(\R^k\times \mathbb{T}^{d-k})}=\|\rho_{\pm}^{00}\|_{L^2(\R^k\times \mathbb{T}^{d-k})}$, and for $\mu>0$, \begin{align*}
 \|g_{\pm}^{00}\|_{L^2(\R^k\times \T^{d-k})}\leq& \frac{1}{2\mu}\int_0^{+\infty}e^{-\frac{y}{2\mu}}\|\rho_{\pm}^{00}(\cdot\mp B_0y)\|_{L^2(\R^k\times \T^{d-k})}dy\\=&\frac{1}{2\mu}\int_0^{+\infty}e^{-\frac{y}{2\mu}}\|\rho_{\pm}^{00}\|_{L^2(\R^k\times \T^{d-k})}dy=\|\rho_{\pm}^{00}\|_{L^2(\R^k\times \T^{d-k})}.
 \end{align*}
 Thus, we obtain
 \begin{align*}
 \|\rho_{\pm}^{1}(t)\|_{L^2(\R^k\times \T^{d-k})}\leq& C_0^2\big(\|\rho_{\pm}^{01}(t)\|_{L^2(\R^k\times \T^{d-k})}+\|g_{\pm}^{01}(t)\|_{L^2(\R^k\times \T^{d-k})}\big)\\
 \leq& C_0^2\big(\|\rho_{\pm}^{00}\|_{L^2(\R^k\times \mathbb{T}^{d-k})}+\|g_{\pm}^{00}\|_{L^2(\R^k\times \mathbb{T}^{d-k})}\big)\\
 \leq& 2C_0^2\|\rho_{\pm}^{00}\|_{L^2(\R^k\times \mathbb{T}^{d-k})}.\end{align*}
 This completes the proof.
 \end{proof}

\section{Proof of Theorem \ref{thm:main}}

 By the local well-posedness result, there exists a unique solution $z_{\pm}\in C\big([0,T^*),H^N(\R^k\times\T^{d-k})\big)$ to the MHD equations \eqref{eq:MHD-f}, where $T^*$ is the maximal existence time of the solution.

Let $\rho_{\pm}^{00}=C_0\rho_{\pm}(0)*N_1 $. Then Lemma \ref{lem:N1} ensures that
\beno
\rho_{\pm}(0)\leq \rho_{\pm}^{00},\quad C_0^{-1}\rho_{\pm}^{00}\leq\rho_{\pm}^{00}*N_1\leq C_0\rho_{\pm}^{00}.
\eeno
Let
\begin{align*}
 \rho_{\pm}^{0}(x)=C_0\int_{\R}\int_{\R^{d-1}}\rho_{\pm}^*(x-x')N_1(x',y')dy'dx',
 \end{align*}
where  $\rho_{\pm}^*(x)=\sup\limits_{y\in\R^{d-1}}\rho_{\pm}(0,x,y)$, thus  $J_{\pm}(0)=\|\rho_{\pm}^*\|_{L^1(\R)}.$ Thanks to
\begin{align*}
 \rho_{\pm}^{00}(x,y)=C_0\int_{\R}\int_{\R^{d-1}}\rho_{\pm}(0,x-x',y-y')N_1(x',y')dy'dx',
 \end{align*}
we find that  $\rho_{\pm}^{00}(x,y)\leq \rho_{\pm}^{0}(x)$ and
\begin{align*}
 \|\rho_{\pm}^{0}\|_{L^1(\R)}=&C_0\int_{\R}\int_{\R}\int_{\R^{d-1}}\rho_{\pm}^*(x-x')N_1(x',y')dy'dx'dx\\ =&C_0\|\rho_{\pm}^*\|_{L^1(\R)}\int_{\R}\int_{\R^{d-1}}N_1(x',y')dy'dx'\\
 =&C_0J_{\pm}(0)\|N_1\|_{L^1(\R^d)}\le C_0^2\epsilon_1<\epsilon_0, \end{align*}
 if $J_{\pm}(0)\leq\epsilon_1=\epsilon_0/2C_0^2 $.

 Now, Lemma \ref{lem:com fun} ensures that there exists $0\leq \rho_{\pm}^1 \in L^{\infty}\cap C\big([0,+\infty)\times\R^d\big) $, which satisfies  \eqref{eq:local-com}, and $\rho_{\pm}(0)\leq\rho_{\pm}^{00}\leq\rho_{\pm}^1(0) $ in $\R^d$,  and
 \beno
 \|\rho_{\pm}^1(t)\|_{L^2(\R^k\times \T^{d-k})}\leq C\|\rho_{\pm}^{00}\|_{L^2(\R^k\times \T^{d-k})}\leq C\|\rho_{\pm}(0)\|_{L^2(\R^k\times \T^{d-k})}\leq C\|z_{\pm}(0)\|_{H^N(\R^k\times \T^{d-k})}.
 \eeno
Then we infer from Lemma \ref{lem:com} that
 $ 0\leq \rho_{\pm}\leq \rho_{\pm}^1$ in $ [0,T)\times\R^d$ for $0<T<T^*$.
 Hence,
 \beno
 \|z_{\pm}(t)\|_{H^N(\R^k\times \T^{d-k})}\leq C\|\rho_{\pm}(t)\|_{L^2(\R^k\times\T^{d-k})}\leq C\|\rho_{\pm}^1(t)\|_{L^2(\R^k\times \T^{d-k})}\leq  C\|z_{\pm}(0)\|_{H^N(\R^k\times \T^{d-k})}
 \eeno
 for any $t\in [0,T)$,  which implies $T^*=+\infty.$

 \section{Decay estimates}

 In this section, we provide some decay estimates of the solution in time when $\mu>0$.
 Here we use the notations in section 3 and section 4.\smallskip

 Using the fact that $(\rho_{\pm}^{01},g_{\pm}^{01})(t,X)=e^{\mu t\triangle}(\rho_{\pm}^{00},g_{\pm}^{00})(X-B_0 t)$, we have
 \begin{align*}
g_{\pm}^{01}(t,X)=\frac{1}{2\mu}\int_0^{+\infty}e^{-\frac{y}{2\mu}}\rho_{\pm}^{01}(t,X\mp B_0y)dy,
 \end{align*}
 which gives
 \beno
  \|g_{\pm}^{01}(t)\|_{L^p(\R^k\times \mathbb{T}^{d-k})}\leq \|\rho_{\pm}^{01}(t)\|_{L^p(\R^k\times \mathbb{T}^{d-k})}=\|e^{\mu t\triangle}\rho_{\pm}^{00}\|_{L^p(\R^k\times \mathbb{T}^{d-k})}.
 \eeno
 Thus, we deduce that for $2\leq p\leq \infty$,
 \beno
 \|\rho_{\pm}^{1}(t)\|_{L^p(\R^k\times \mathbb{T}^{d-k})}\leq C\|e^{\mu t\triangle}\rho_{\pm}^{00}\|_{L^p(\R^k\times \mathbb{T}^{d-k})}.
 \eeno
which along with the fact that $e^{\mu t\triangle}\rho_{\pm}^{00}=C_0e^{\mu t\triangle}\rho_{\pm}(0)*N_1$, we infer that
\begin{align*}
\|\rho_{\pm}^{1}(t)\|_{L^p(\R^k\times \mathbb{T}^{d-k})}\leq& C\|e^{\mu t\triangle}\rho_{\pm}^{00}\|_{L^p(\R^k\times \mathbb{T}^{d-k})}\\
\leq& C\|e^{\mu t\triangle}\rho_{\pm}(0)\|_{L^p(\R^k\times \mathbb{T}^{d-k})}\|N_1\|_{L^1(\R^d)}
\end{align*}
 for $2\leq p\leq \infty.$ Thus, we obtain
 \begin{align*}
 \|z_{\pm}(t)\|_{W^{1,\infty}(\R^k\times \mathbb{T}^{d-k})}\leq& C\|\rho^1_\pm(t)\|_{L^{\infty}(\R^k\times \mathbb{T}^{d-k})}\leq C\|e^{\mu t\triangle}\rho_{\pm}(0)\|_{L^{\infty}(\R^k\times \mathbb{T}^{d-k})}\\
 \leq& C(1+\mu t)^{-\frac{k}{4}}\|z_{\pm}(0)\|_{H^N(\R^k\times \mathbb{T}^{d-k})}.
 \end{align*}
 If $\rho_{\pm}(0)\in L^1(\R^d)$, we similarly have
\beno
&&\|z_{\pm}(t)\|_{H^N(\R^k\times \mathbb{T}^{d-k})}\leq C(1+\mu t)^{-\frac{k}{4}} ,\\
&&\|z_{\pm}(t)\|_{W^{1,\infty}(\R^k\times \mathbb{T}^{d-k})}\leq C(1+\mu t)^{-\frac{k}{2}}.
\eeno

\section*{Acknowledgements}

This work was supported by NSF of China under Grant 11425103.



\begin{thebibliography}{99}


\bibitem{AZ} Abidi H, Zhang P.  On the global solution of 3-D MHD system with initial data near equilibrium. {Comm. Pure Appl. Math.}, online.

\bibitem{BSS}  Bardos C,  Sulem C, Sulem P.-L. Longtime dynamics of a conductive fluid in the presence of a strong magnetic field. {Trans. Amer. Math. Soc.}, 1988, 305:175-191.

\bibitem{CC} Califano F, Chiuderi C.  Resistivity-independent dissipation of magnetrodydrodynamic waves in an inhomogeneous plasma.
{Phy. Rev. E},  1999, part B 60: 4701-4707.

\bibitem{CL} Cai Y, Lei Z.  Global Well-posedness of the Incompressible
Magnetohydrodynamics. arXiv:1605.00439.

\bibitem{CW} Cao C,  Wu J.  Global regularity for the 2D MHD equations with mixed partial dissipation and magnetic diffusion. { Adv. Math.},  2011, 226: 1803-1822.

\bibitem{HXY} He L, Xu L,Yu P.  On global dynamics of three dimensional magnetohydrodynamics:
nonlinear stability of Alfv\'{e}n waves.  arXiv:1603.08205.

\bibitem{LXZ} Lin F, Xu L., Zhang P.  Global small solutions of 2-D incompressible MHD system. {J. Differential Equations},  2015, 259: 5440-5485.

\bibitem{Ren1} Ren X, Wu J, Xiang Z, Zhang Z. Global existence and decay of smooth solution for the 2-D MHD equations without magnetic diffusion. {J. Funct. Anal.},  2014, 267: 503-541.

\bibitem{Ren2} Ren X, Xiang Z, Zhang Z. Global well-posedness for the 2D MHD equations without magnetic diffusion in a strip domain. {Nonlinearity},  2016, 29:1257-1291.

\bibitem{WZ} Wei D, Zhang Z.  Global well-posedness of the MHD equations in a homogeneous magnetic field. Analysis \& PDE.

\bibitem{Zhang} Zhang T.  An elementary proof of the global existence and uniqueness theorem to 2-D incompressible non-resistive MHD system. arXiv:1404.5681.

\end{thebibliography}
 \end{document}